%===============================================================================
% $Id: ifacconf.tex 19 2011-10-27 09:32:13Z jpuente $  
% Template for IFAC meeting papers
% Copyright (c) 2007-2008 International Federation of Automatic Control
%===============================================================================
\documentclass[10pt,a4paper]{article}

\usepackage{natbib}        % required for bibliography
\usepackage{graphicx}      % include this line if your document contains figures
\usepackage{amsmath}
\usepackage{amssymb}
\usepackage{comment}
\usepackage{mynotation}
\usepackage{algorithm}
\usepackage{tikz}
\usepackage{pgfplots}
\usepackage{amsthm}
\pgfplotsset{compat=newest}
    \newlength\figureheight
    \newlength\figurewidth
    \setlength\figureheight{5cm}
    \setlength\figurewidth{8cm}
\usetikzlibrary{matrix,positioning}
\def\AN{\A_{22}} %

%\newcommand{\citep}[1]{\cite{#1}}
%===============================================================================We illusrtate

\newtheorem{thm}{Theorem}
\newtheorem{cor}[thm]{Corollary}
\newtheorem{lem}[thm]{Lemma}

\newtheorem{prop}[thm]{Proposition}

\newtheorem{exmp}[thm]{Example}
\newtheorem{rem}[thm]{Remark}

\title{Discretizing stochastic dynamical systems using Lyapunov equations
\thanks{This work is supported by the Swedish Foundation for Strategic Research under the project Cooperative Localization. All three authors are with the Division of Automatic Control,
Link\"oping University, Sweden.  E-mail: nikwa@isy.liu.se, axelsson@isy.liu.se, fredrik@isy.liu.se.}
}
% Title, preferably not more than 10 words.

%\thanks[footnoteinfo]{This work is supported by the Swedish Foundation for Strategic Research under the project Cooperative Localization.}

\author{Niklas Wahlstr\"om \and Patrix Axelsson \and Fredrik Gustafsson}

\begin{document}
\maketitle
%\thanks{This work is supported by the Swedish Foundation for Strategic Research under the project Cooperative Localization.}

\begin{abstract}                % Abstract of not more than 250 words.
Stochastic dynamical systems are fundamental in state estimation, system
identification and control. System models are often provided in
continuous time, while a major part of the applied theory is developed
for discrete-time systems. Discretization of continuous-time models is
hence fundamental.
%, yet current state of the art is lacking full generality an there are cases where numerical stability can be arbitrarily bad.
  % State space models intended for filtering and estimation are
  % commonly modeled to be driven by white noise. The correct
  % discretization of such models is often overseen. 
We present a novel algorithm using a
  combination of Lyapunov equations and analytical solutions, enabling
  efficient implementation in software. The proposed method
  circumvents numerical problems exhibited by standard algorithms in
  the literature. Both theoretical and simulation results are
  provided.
\end{abstract}

%\begin{keyword}
%Optimal sampling, Lyapunov equations, matrix exponential, white noise
%\end{keyword}

%\end{frontmatter}
%===============================================================================

\section{Introduction}
%% Set the scene
% Physical modelling of dynamics
Dynamical processes in engineering and physics have for a long time successfully been modeled with continuous-time differential equations. 
% Stochastic describtion
In order to account for uncertainties, these equations are usually driven by an unknown stochastic process called process noise. This noise is ideally modeled as completely ``white'' in order to obtain the Markov property, which is required in recursive Bayesian inference, such as Kalman filtering. 
% Discretization of time
However, in order to implement such filtering, the continuous-time model has to be discretized. 
%Requires solution of integral
Such discretization includes solving an integral involving the matrix exponential on the form
\begin{align} \label{eq:int}
\Q = \int_0^T e^{\A \tau}\S e^{\A^\Transp \tau}d\tau,
\end{align}
where $\A, \S, \Q \in \mathbb{R}^{n \times n}$.
% We propose....

We propose an algorithm for solving \eqref{eq:int} by decomposing the problem into subproblems and then solve these subproblems either analytically or using a combination of Lyapunov and Sylvester equations.

%% Why is this important
In many practical applications the discrete-time process noise covariance is modeled and tuned directly, rather than discretized from its continuous-time counterpart. However, in certain scenarios the dependency between the discrete-time process noise covariance and the sampling time is important. If the filtering should work on different devices with different sampling frequencies, this dependency should be properly modeled to guarantee the same dynamical behavior of the filter. Further, in data with non-equidistant sampling the process noise covariance has to be rescaled at each time instant. This is often the case in Gaussian process regression which can be described  with a state-space model and solved using Kalman filtering \citep{sarkka:13}. %hartikainen:10, 
% non-equdistant sampeling
% Tuning of process noise, idenpendent of sampeling frequincy
% Gaussian process regression

%% Describe literature
In the literature there exist different algorithms for computing the integral \eqref{eq:int}. 
% Standard solution - matrix exponential 
The probably most well-cited one was presented by \cite{VanLoan:78}, which involves computing the matrix exponential for an augmented $2n\hspace{-0.5mm}\times\hspace{-0.5mm}2n$ matrix followed by a matrix multiplication of two resulting submatrices. This method does not require any assumption on the model, however the resulting matrix becomes ill-conditioned if the sampling time is large or if the poles of the system are fast.
% Diagonal structure of A
For certain models, \eqref{eq:int} can be solved analytically. \cite{Rome:69} presented a direct solution under the assumption that $\A$ is diagonalizable. The method requires an eigenvalue decomposition which is not always numerical stable \citep{Higham:08} and not all matrices are diagonalizable. Finally, the integral can always be solved numerically using the trapezoidal or the rectangular method.

%n \cite{GrewalA:08} a solution is presented where

% Numerical solution

%% Describe our contribution
In this work we present an alternative method for solving
\eqref{eq:int}. This method is based on a Lyapunov equation which
characterizes the solution of \eqref{eq:int}. However, since Lyapunov
equations cannot be solved if the system contains integrators
\citep{antoulas:2005}, the problem is decomposed into subproblems
where the integrators are treated separately. As will be explained,
one set of subproblems cannot be solved using Lyapunov equations,
but they do have an analytical solution of \eqref{eq:int}. Conversely,
the remaining set of subproblems do not have a closed form solution of
\eqref{eq:int}, but then the method with Lyapunov equations works fine. The
algorithm involves computing the matrix exponential of the
$n\hspace{-0.5mm}\times\hspace{-0.5mm}n$ system matrix rather than an
augmented $2n\hspace{-0.5mm}\times\hspace{-0.5mm}2n$ matrix as
required by the solution by Van Loan. Furthermore, the proposed
algorithm circumvents some numerical problems in the method proposed by Van Loan.
Our theoretical algebraic contributions include:
\begin{itemize}
\item A Lemma describing the relation between \eqref{eq:int} and the aforementioned Lyapunov equation, see Lemma~\ref{thm:Q}.
\item A novel extension of this solution which also handles integrators, see Section~\ref{sec:combined}.
\item A complete algorithm which solves \eqref{eq:int} with complementing numerical properties compared to existing solutions, see Algorithm~\ref{alg:proposed}.
\end{itemize}

The outline of the paper is as follows. In Section~\ref{sec:preliminaries} the mathematical models are presented and the importance of the discretization method in use is motivated. In Section~\ref{sec:lyap} the discretization using Lyapunov equations is presented together with the main theoretical contributions of the paper. In Section~\ref{sec:combined} the solutions from the previous two sections will be combined to solve for systems with integrators. In Section~\ref{sec:num} a numerical evaluation is performed and in Section~\ref{sec:con} the conclusions are summarized and future directions pointed out.

% Outline

%\pagebreak
%a
%\pagebreak

\section{Mathematical preliminaries}
\label{sec:preliminaries}
Consider the following It\^{o} stochastic differential equation %linear time-invariant continuous-time stochastic differential equation
%\footnote{
%}
\begin{subequations}
\begin{align} \label{eq:cont_dyn}
d\x(t) & = \A \x(t)dt + d\vec{\beta}(t),
\end{align}
where $\vec{\beta}(t)$ is a Brownian motion with
\begin{align}
\E[d\vec{\beta}(t) d\vec{\beta}(t)^\Transp] = \S dt.
\end{align}
\end{subequations}
The model \eqref{eq:cont_dyn} is formally equivalent to the stochastic differential equation
\begin{subequations} \label{eq:cont_dyn2}
\begin{align} 
\frac{d\x(t)}{dt} & = \A \x(t) + \w(t), \label{eq:cont_dyn2a}
\end{align}
where $\w(t)$ is a zero-mean white Gaussian process with
\begin{align}
\E[\w(t) \w(\tau)^\Transp] = \S\delta(t-\tau).
\end{align}
\end{subequations}
Since $\w(t)$ is not square Riemann integrable, the model \eqref{eq:cont_dyn2} does not have any mathematical meaning \citep{Jazwinski:70}. However, we can still intuitively think of it as a stochastic differential equation driven by white noise. 

It is important to note that this is just a model of the physical process and cannot be found in reality. For example, white noise has a flat power spectral density requiring infinite power, which is not physically realizable. Nevertheless, using this continuous-time model will lead to sound properties for the equivalent discrete-time model as will be explained later.

By integrating \eqref{eq:cont_dyn} over the time interval $[t_k,t_{k+1}]$ we can find its discrete-time equivalence as
\begin{align}
\x(t_{k+1})  & = \underbrace{e^{\A T_k}}_{\F_{T_k}}\underbrace{\x(t_{k})}_{\x_k} + \underbrace{\int_{t_k}^{t_{k+1}}  e^{\A (\tau-t_{k+1})} d\vec{\beta}(\tau)}_{\w_k},
\end{align}
where $T_k=t_{k+1}-t_k$ is the sampling time. This can be stated as a discrete-time stochastic difference equation
\begin{subequations} \label{eq:disc_mod}
\begin{align} \label{eq:disc_dyn}
\x_{k+1}  & = \F_{T_k} \x_k + \w_k.
\end{align}
By following for example \cite{Jazwinski:70}, the noise $\w_k$ will be zero-mean, white Gaussian
\begin{align}
\E[\w_k \w_{l}^\Transp]=\Q_{T_k}\delta_{kl},
\end{align}
\end{subequations}
where $\delta_{kl}$ is the Kronecker delta function and
\begin{subequations} \label{eq:FQint}
\begin{align} 
\Q_{T_k} & = \int_0^{T_k} e^{\A\tau}\S {e^{\A^\Transp \tau}} d\tau, \label{eq:Qint}
\end{align}
which together with the discrete-time system matrix
\begin{align}
\F_{T_k} & = e^{\A T_k}
\end{align}
\end{subequations}
completes the discretization procedure.

The integral expression \eqref{eq:Qint} can be found in multiple textbooks on Kalman filtering (e.g. \cite{bar:2001,GrewalA:08}) for modeling discrete-time dynamical processes. Nevertheless, the discretization of continuous-time differential equations for filtering applications is often misused. For example, the noise $\w(t)$ is commonly assumed to be constant during each sampling interval leading to the following discrete-time noise covariance
\begin{subequations} \label{eq:alt}
\begin{align} \label{eq:altA}
\bar{\Q}_{T_k}^{\textrm{A}} & = \frac{1}{T_k}\underbrace{\left(\int_0^{T_k} e^{\A\tau}d\tau \right)}_{\bar{\G}_{T_k}}\S \underbrace{\left(\int_0^{T_k} {e^{\A^\Transp \tau}} d\tau\right)}_{\bar{\G}_{T_k}^\Transp},
\end{align}
or just rescaling the continuous-time noise covariance with the sampling time
\begin{align} \label{eq:altB}
\bar{\Q}_{T_k}^{\textrm{B}} & = T_k\S.
\end{align}
\end{subequations}
In contrast to the discretization in \eqref{eq:FQint}, the assumptions in \eqref{eq:alt} lead to a dynamical description of the process which depends on the sampling intervals, whereas the actual physical process do not. This can be seen by the property derived in the following example.
\begin{exmp}
Consider the three time instances $t_1$, $t_2$ and $t_3$. We then have
\begin{subequations}
\begin{align}
\textrm{Cov}\Big[\x(t_{3})\Big|\x(t_1)\Big]
& = \textrm{Cov}\Big[\F_{t_3-t_2}\x(t_{2})+\w_2\Big|\x(t_1)\Big] \\
& = \textrm{Cov}\Big[\F_{t_3-t_2}\Big(\F_{t_2-t_1}\x(t_{1})+\w_1\Big)+\w_2\Big|\x(t_1)\Big] \\
& = \textrm{Cov}\Big[\F_{T_2}\w_1+\w_2\Big|\x(t_1)\Big] \\
& = \F_{T_2}\Q_{T_1}\F_{T_2}^\Transp+\Q_{T_2}.
\end{align}
\end{subequations}
We could also use only one time interval and go from $t_1$ directly to $t_3$ with the sampling time $t_3-t_1=T_1+T_2$, which gives
\begin{subequations}
\begin{align}
\textrm{Cov}\Big[\x(t_{3})\Big|\x(t_1)\Big]
& = \textrm{Cov}\Big[\F_{t_3-t_1}\x(t_{1})+\w_1\Big|\x(t_1)\Big]\\
& = \textrm{Cov}\Big[\w_1\Big|\x(t_1)\Big]
 = \Q_{t_3-t_1} = \Q_{T_1 + T_2}.
\end{align}
\end{subequations}
This gives the relation
\begin{align} \label{eq:correct_sampling}
\Q_{T_1+T_2}=\F_{T_2}\Q_{T_1}\F_{T_2}^\Transp+\Q_{T_2}.
\end{align}
\end{exmp}
Indeed, this property is fulfilled for the discretization presented in \eqref{eq:FQint}.
\begin{lem}
If $\F_{T_k}$ and $\Q_{T_k}$ are computed as described in \eqref{eq:FQint}, then
$$\Q_{T_1+T_2}=\F_{T_2}\Q_{T_1}\F_{T_2}^\Transp+\Q_{T_2}.$$
\end{lem}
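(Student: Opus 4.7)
The plan is to prove the identity by direct manipulation of the defining integral \eqref{eq:Qint} for $\Q_{T_1+T_2}$. I would split the interval of integration at $\tau=T_2$, so that
\[
\Q_{T_1+T_2} = \int_0^{T_2} e^{\A\tau}\S e^{\A^\Transp\tau}\,d\tau + \int_{T_2}^{T_1+T_2} e^{\A\tau}\S e^{\A^\Transp\tau}\,d\tau.
\]
The first summand is, by definition, exactly $\Q_{T_2}$. The work is then concentrated in rewriting the second summand.

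For the second integral I would substitute $\tau = s + T_2$, turning the limits back into $0$ and $T_1$. Since $\A$ commutes with itself, $e^{\A(s+T_2)} = e^{\A T_2}e^{\A s}$ and analogously $e^{\A^\Transp(s+T_2)} = e^{\A^\Transp s}e^{\A^\Transp T_2}$. The factors $e^{\A T_2}$ and $e^{\A^\Transp T_2}$ are independent of $s$, so they pull outside the integral on the left and right respectively. What remains inside is precisely $\int_0^{T_1} e^{\A s}\S e^{\A^\Transp s}\,ds = \Q_{T_1}$, and recognizing $e^{\A T_2} = \F_{T_2}$ gives $\F_{T_2}\Q_{T_1}\F_{T_2}^\Transp$. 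Adding the two pieces yields the claimed identity.

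There is no real obstacle here; the only point requiring a brief justification is the splitting $e^{\A(s+T_2)} = e^{\A T_2}e^{\A s}$, which is valid because $\A T_2$ and $\A s$ commute (they are scalar multiples of the same matrix), and the separation of $e^{\A T_2}$ and $e^{\A^\Transp T_2}$ out of the integral on opposite sides of $\S$, which is merely linearity of the integral combined with the fact that these matrices do not depend on the dummy variable $s$.
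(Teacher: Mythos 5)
Your proof is correct and follows essentially the same route as the paper: split the integral at $\tau = T_2$, recognize the first piece as $\Q_{T_2}$, and use the semigroup property $e^{\A(s+T_2)} = e^{\A T_2}e^{\A s}$ to pull $\F_{T_2}$ and $\F_{T_2}^\Transp$ out of the second piece, leaving $\Q_{T_1}$. Your version is in fact slightly more careful than the paper's, since you make the change of variables $\tau = s + T_2$ and its justification explicit rather than folding it into one display.
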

\begin{proof}
\begin{align*}
\Q_{T_1+T_2} \hspace{-0.5mm} & = \hspace{-0.5mm}\int_0^{T_1+T_2} e^{\A \tau}\S {e^{\A^\Transp \tau}} d\tau \\
						 & = \hspace{-0.5mm}\int_0^{T_2} e^{\A \tau}\S {e^{\A^\Transp \tau}} d\tau + \int_{T_2}^{T_1+T_2} e^{\A \tau}\S {e^{\A^\Transp \tau}} d\tau \\
						 & = \hspace{-0.5mm}\underbrace{\int_0^{T_2} e^{\A \tau}\S {e^{\A^\Transp \tau}} d\tau}_{\Q_2}
						 %& \quad 
						 + \underbrace{e^{\A T_2}}_{\F_{T_2}}\underbrace{\int_0^{T_1} e^{\A^\Transp \tau}\S {e^{\A^\Transp \tau}} d\tau}_{\Q_{T_1}} \underbrace{{e^{\A^\Transp T_2}}}_{\F_{T_2}^\Transp}\\						 
						 & = \hspace{-0.5mm}\Q_{T_2} + \F_{T_2}\Q_{T_1}\F_{T_2}^\Transp.
\end{align*} 
%\hfill $\Box$
\end{proof}
With similar calculations we can easily derive the equivalent results for the covariance matrices in \eqref{eq:alt} and conclude that they do not share this property since
\begin{subequations}
\begin{align}
\bar{\Q}^{\textrm{A}}_{T_1+T_2} \hspace{-0.5mm} & = \hspace{-0.5mm} \bar{\Q}^{\textrm{A}}_{T_2} \hspace{-0.5mm} + \hspace{-0.5mm} \F_{T_2}\bar{\Q}^{\textrm{A}}_{T_1}\F_{T_2}^\Transp \hspace{-0.5mm} + \hspace{-0.5mm} \F_{T_2}\bar{\G}_{T_1}\S\bar{\G}_{T_2}^\Transp \hspace{-0.5mm} + \hspace{-0.5mm} \bar{\G}_{T_2}\S\bar{\G}_{T_1}^\Transp \F_{T_2}^\Transp \notag \\
																& \neq \hspace{-0.5mm} \bar{\Q}^{\textrm{A}}_{T_2} \hspace{-0.5mm} + \hspace{-0.5mm} \F_{T_2}\bar{\Q}^{\textrm{A}}_{T_1}\F_{T_2}^\Transp,\\
\bar{\Q}^{\textrm{B}}_{T_1+T_2} & = \hspace{-0.5mm} \bar{\Q}^{\textrm{B}}_{T_2} \hspace{-0.5mm} + \hspace{-0.5mm} \bar{\Q}^{\textrm{B}}_{T_1} \notag \\
																& \neq \hspace{-0.5mm} \bar{\Q}^{\textrm{B}}_{T_2} \hspace{-0.5mm} + \hspace{-0.5mm} \F_{T_2}\bar{\Q}^{\textrm{B}}_{T_1}\F_{T_2}^\Transp.
\end{align}
\end{subequations}

Hence by assuming that the underlying continuous-time model is driven by a continuous-time white process the corresponding discrete-time model has the property that the dynamical description does not depend on the sampling intervals, in contrast to other common discretization procedures. We can therefore see \eqref{eq:disc_mod} and \eqref{eq:FQint} as algebraic relations between $\A$, $\S$, $T_k$, $\F_{T_k}$ and $\Q_{T_k}$ fulfilling the property in \eqref{eq:correct_sampling} without deriving it from its continuous-time counterpart.

The main advantage with the alternative expressions in \eqref{eq:alt} in comparison to \eqref{eq:Qint} is their ease of calculation (especially true for \eqref{eq:altB}). The remaining part of this work will therefore describe how the integral \eqref{eq:Qint} can be solved in an efficient manner with good numerical properties.

\section{Discretization using Lyapunov equations}
\label{sec:lyap}

A method for computing the integral \eqref{eq:Qint} will now be presented. The method will be proposed by requiring the system to be asymptotically stable. Later in this section we will prove that this requirements actually can be relaxed.

\subsection{Proposal of solution}
If the system is asymptotically stable, i.e. if all eigenvalues of $\A$ have negative real part, a stationary covariance will exist and we denote it as
\begin{align} \label{eq:P1}
\textrm{Cov}[\x(t)] = \textrm{Cov}[\x_k]& = \P.
\end{align}
This covariance satisfies the following two Lyapunov equations for the continuous-time model \eqref{eq:cont_dyn} and the discrete-time model \eqref{eq:disc_dyn}, respectively
\begin{subequations} \label{eq:P2}
\begin{align} 
\mat{0} & = \A\P+\P\A^\Transp+\S, \label{eq:riccati} \\
\P 			& = \F_{T_k} \P \F_{T_k}^\Transp + \Q_{T_k}.
\end{align}
\end{subequations}
which gives a structured way of computing $\Q_{T_k}$, as presented in Algorithm~\ref{alg:lyap}.

\begin{algorithm}[h]
\caption{Solution using Lyapunov equation for $\P$} \label{alg:lyap}
The matrices $\A$ and $\S$ and the scalar ${T_k}$ are given. The matrices $\F_{T_k}$ and $\Q_{T_k}$ in \eqref{eq:FQint} can then be computed as
\begin{subequations} \label{eq:FQlyap}
\begin{align} 
\F_{T_k} & = e^{\A {T_k}}, \\
\Q_{T_k} & = \P - \F_{T_k} \P \F_{T_k}^\Transp, \label{eq:Qlyap}
\end{align}
where $\P$ is the solution to the Lyapunov equation
\begin{align}
\A\P+\P\A^\Transp = -\S.
\end{align}
\end{subequations}
\end{algorithm}
This algorithm can also be reformulated such that we do not need to compute $\P$ in an intermediate step.
By multiplying \eqref{eq:Qlyap} with $\A$ from the left and with $\A^\Transp$ from the right, respectively, we get
\begin{subequations}
\begin{align}
\A\Q_{T_k} & = \A\P - \F_{T_k} \A\P \F_{T_k}^\Transp, \label{eq:AQ}\\
\Q_{T_k}\A^\Transp & = \P\A^\Transp - \F_{T_k} \P\A^\Transp \F_{T_k}^\Transp, \label{eq:QA}
\end{align}
\end{subequations}
where the fact that $\F_{T_k}$ and $\A$ commute has been used since 
\begin{align*}
\A \F_{T_k} & = \A (\I+\A + \frac{1}{2}\A^2 \dots)=(\A+\A^2 + \frac{1}{2}\A^3 \dots) \\
						& = (\I+\A + \frac{1}{2}\A^2 \dots)\A = \F_{T_k}\A.
\end{align*}
By adding \eqref{eq:AQ} and \eqref{eq:QA}, we get
\begin{align}
\A\Q_{T_k} + \Q_{T_k}\A^\Transp \hspace{-0.5mm}& = \hspace{-0.5mm}\A\P  - \F_{T_k} \A\P \F_{T_k}^\Transp + \P\A^\Transp - \F_{T_k} \P\A^\Transp \F_{T_k}^\Transp \notag \\
																& = \hspace{-0.5mm}\underbrace{\A\P + \P\A^\Transp}_{-\S} - \F_{T_k}(\underbrace{\A\P + \P\A^\Transp}_{-\S})\F_{T_k}^\Transp \notag \\
																& = \hspace{-0.5mm}-\S + \F_{T_k}\S\F_{T_k}^\Transp.
\end{align}
This gives the following algorithm as presented in Algorithm~\ref{alg:lyap2}.
\begin{algorithm}
\caption{Solution using Lyapunov equation for $\Q_{T_k}$} \label{alg:lyap2}
The matrices $\A$ and $\S$ and the scalar ${T_k}$ are given. The matrices $\F_{T_k}$ and $\Q_{T_k}$ in \eqref{eq:FQint} can then be computed as
\begin{subequations} \label{eq:FQlyap2}
\begin{align} 
\F_{T_k} & = e^{\A {T_k}}
\end{align}
and $\Q_{T_k}$ is the solution to the Lyapunov equation
\begin{align}
\A\Q_{T_k}+\Q_{T_k}\A^\Transp=-\V_{T_k}, \label{eq:Qlyap2}
\end{align}
where
\begin{align}
\V_{T_k}=\S-\F_{T_k}\S\F_{T_k}^\Transp. \label{eq:V}
\end{align}
\end{subequations}
\end{algorithm}
This algorithm is similar to the solution presented by \cite{axelsson:12} derived from a continuous-time differential Lyapunov equation.

From here on we will proceed with Algorithm~\ref{alg:lyap2}. However, all results (including the final algorithm) can be reformulated to suit Algorithm~\ref{alg:lyap} as well.

\subsection{Theoretical result}
It can now be proven that Algorithm~\ref{alg:lyap2} (and consequently also Algorithm~\ref{alg:lyap}) gives a solution to \eqref{eq:FQint}, provided that the solution of the Lyapunov equation exists and is unique.
\begin{lem} \label{thm:Q}
The solution to the integral
\begin{subequations}
\begin{align} \label{eq:int_lem1}
\Q=\int_0^{T}e^{\A \tau}\S e^{\B\tau} d\tau
\end{align}
satisfies the Sylvester equation 
\begin{align} \label{eq:sylv_lem1}
\A\Q+\Q\B = -\S+e^{\A T}\S e^{\B T}.
\end{align}
\end{subequations}
\end{lem}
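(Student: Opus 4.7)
The plan is to obtain the Sylvester equation by recognizing the integrand as a total derivative and applying the fundamental theorem of calculus. Specifically, I would start by computing
\begin{align*}
\frac{d}{d\tau}\left(e^{\A\tau}\S e^{\B\tau}\right) = \A e^{\A\tau}\S e^{\B\tau} + e^{\A\tau}\S e^{\B\tau}\B,
\end{align*}
using the standard fact that $\frac{d}{d\tau}e^{\A\tau} = \A e^{\A\tau} = e^{\A\tau}\A$, together with the product rule. This is the key identity the whole proof hinges on.

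Next, I would integrate both sides of this identity over $\tau \in [0,T]$. The left-hand side telescopes via the fundamental theorem of calculus to $e^{\A T}\S e^{\B T} - \S$. On the right-hand side, since $\A$ commutes with $e^{\A\tau}$ (as already noted in the paragraph preceding Algorithm~\ref{alg:lyap2}, and analogously for $\B$ with $e^{\B\tau}$), the constant matrix factors $\A$ and $\B$ can be pulled outside the integral, giving
\begin{align*}
\A\int_0^T e^{\A\tau}\S e^{\B\tau}\,d\tau + \left(\int_0^T e^{\A\tau}\S e^{\B\tau}\,d\tau\right)\B = \A\Q + \Q\B.
\end{align*}
Equating the two sides yields exactly the claimed Sylvester equation \eqref{eq:sylv_lem1}.

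There is no real obstacle here; the argument is a one-line application of the product rule plus integration. The only subtlety worth making explicit is the commutativity of $\A$ with $e^{\A\tau}$ (and $\B$ with $e^{\B\tau}$), which justifies both the form of the derivative and the extraction of $\A$, $\B$ from the integral. No hypothesis of invertibility of $\A$ or $\B$, nor of asymptotic stability, is needed for this identity — which is precisely why the lemma extends the applicability of Algorithm~\ref{alg:lyap2} beyond the stable case used to motivate it.
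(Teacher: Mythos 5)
Your proof is correct and is essentially identical to the paper's: both rest on the product rule identity $\frac{d}{d\tau}\bigl(e^{\A\tau}\S e^{\B\tau}\bigr)=\A e^{\A\tau}\S e^{\B\tau}+e^{\A\tau}\S e^{\B\tau}\B$ together with the fundamental theorem of calculus. The only difference is direction of presentation — the paper starts from $\A\Q+\Q\B$ and collapses the integrals into a total derivative, while you integrate the derivative identity — which is immaterial.
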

\begin{proof}
Start with \eqref{eq:sylv_lem1} and replace $\Q$ with the integral \eqref{eq:int_lem1}. This gives
\begin{subequations}
\begin{align}
\A\Q+\Q\B											 & = \int_0^{T}\A e^{\A \tau}\S e^{\B\tau} d\tau+\int_0^{T}e^{\A \tau}\S e^{\B\tau}\B d\tau \notag \\
											 				& = \int_0^{T} \frac{d}{d\tau}[e^{\A \tau}\S e^{\B\tau}] d\tau \\
											 				& = e^{\A \tau}\S e^{\B\tau}\Big|_0^{T}=e^{\A T}\S e^{\B T} - \S.										 				
\end{align}
\end{subequations}
%\hfill $\Box$
\end{proof}
\begin{rem}
A similar result was presented by \cite{Gawronski:04} in the context of time-limited grammians. However, that result requires $\B = \A^\Transp$ and that all eigenvalues of $\A$ should have negative real part. %Also \cite{axelsson:13}
\end{rem}
\begin{rem}
  Note that Lemma~\ref{thm:Q} does not require anything about the
  matrices $\A$ and $\B$. In particular, they do not need to be stable as
  assumed in \eqref{eq:P1} and \eqref{eq:P2}.
  Indeed, the requirements for the Lyapunov equation
  \eqref{eq:Qlyap2} to have a unique solution are milder. This is
  answered by the following proposition, which is given for the more
  general Sylvester equation.
\end{rem}
\begin{prop} \label{prop:sylvester}
The Sylvester equation
\begin{align}
\A\P+\P\B=\C
\end{align}
has a unique solution $\P$ if and only if
\begin{align} \label{eq:prop_req}
\lambda_i(\A)+\lambda_j(\B) \neq 0 \quad \forall i,j.
\end{align}
\end{prop}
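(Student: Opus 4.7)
The plan is to recast the Sylvester equation as an ordinary linear system via vectorization, and then read off invertibility from the spectrum of the associated Kronecker sum. Applying the $\vec(\cdot)$ operator (stacking columns) together with the standard identity $\vec(MXN) = (N^\Transp \otimes M)\vec(X)$, the equation $\A\P + \P\B = \C$ becomes
\begin{align*}
\bigl[(\I \otimes \A) + (\B^\Transp \otimes \I)\bigr]\vec(\P) = \vec(\C).
\end{align*}
Thus a unique $\P$ exists for every right-hand side if and only if the coefficient matrix $M := (\I \otimes \A) + (\B^\Transp \otimes \I)$ is nonsingular, and moreover this equivalence holds for the given $\C$ as well (since the homogeneous equation has only the trivial solution precisely when $M$ is invertible).

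The next step is to compute the spectrum of $M$. I would invoke (or briefly justify from Schur decompositions of $\A$ and $\B^\Transp$) the standard fact that the eigenvalues of a Kronecker sum $(\I \otimes \A) + (\B^\Transp \otimes \I)$ are exactly the pairwise sums $\lambda_i(\A) + \lambda_j(\B^\Transp)$. Concretely, if $\A v_i = \lambda_i(\A) v_i$ and $\B^\Transp u_j = \lambda_j(\B) u_j$ (with $\lambda_j(\B^\Transp) = \lambda_j(\B)$), then
\begin{align*}
M (u_j \otimes v_i) = (u_j \otimes \A v_i) + (\B^\Transp u_j \otimes v_i) = \bigl(\lambda_i(\A) + \lambda_j(\B)\bigr)(u_j \otimes v_i),
\end{align*}
which exhibits $n^2$ eigenpairs in the diagonalizable case; the general case follows by a continuity/Schur-triangularization argument, which is the only mildly delicate point.

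Finally, combining the two steps: $M$ is invertible iff none of its eigenvalues is zero, iff $\lambda_i(\A) + \lambda_j(\B) \neq 0$ for every pair $(i,j)$, which is exactly the condition \eqref{eq:prop_req}. The main (small) obstacle is justifying the Kronecker-sum eigenvalue formula cleanly in the non-diagonalizable case; I would handle this by simultaneously triangularizing $\A$ and $\B^\Transp$ via Schur decompositions, observing that $M$ then becomes upper triangular with the claimed diagonal entries, so its determinant is $\prod_{i,j}(\lambda_i(\A) + \lambda_j(\B))$, completing the equivalence.
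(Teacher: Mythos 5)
Your proof is correct and complete. Note that the paper itself offers no argument for this proposition at all --- it simply points to \cite{antoulas:2005} --- so you have supplied the proof the authors delegated, and the vectorization route you chose is the standard one found in such references. Your two potentially delicate points are both handled soundly: first, the reduction of ``unique solution for the given $\C$'' to nonsingularity of $M = (\I \otimes \A) + (\B^\Transp \otimes \I)$ is valid, since when $M$ is singular the equation either has no solution or an affine family of them, so uniqueness fails in both cases, while invertibility gives existence and uniqueness simultaneously; second, you correctly recognize that the eigenvector computation $M(u_j \otimes v_i) = (\lambda_i(\A)+\lambda_j(\B))(u_j \otimes v_i)$ only exhibits a full eigenbasis in the diagonalizable case, and your fallback is the right one: with Schur decompositions $\A = Q_1 T_1 Q_1^*$ and $\B^\Transp = Q_2 T_2 Q_2^*$, the unitary $Q_2 \otimes Q_1$ conjugates $M$ to $(\I \otimes T_1) + (T_2 \otimes \I)$, which is upper triangular with diagonal entries $\lambda_i(\A)+\lambda_j(\B)$, so
\begin{align*}
\det M = \prod_{i,j}\bigl(\lambda_i(\A)+\lambda_j(\B)\bigr),
\end{align*}
and the equivalence with condition \eqref{eq:prop_req} follows without any appeal to continuity. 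One could alternatively avoid Kronecker products entirely by viewing $\P \mapsto \A\P + \P\B$ as a linear operator and arguing about its spectrum directly, but your version is concrete and self-contained, and there is no gap to report.
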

For proof, see for example \cite{antoulas:2005}.

For the case where $\B=\A^\Transp$ and with the requirement that $\A$ is stable, the condition \eqref{eq:prop_req} is always fulfilled. By using that observation together with Lemma~\ref{thm:Q} where $T \rightarrow \infty$, we get the following well known results relating the controllability grammian to a Lyapunov equation, which can be found in most textbooks on linear systems, e.g. \cite{rugh:96}.
\begin{cor}
If all eigenvalues of $\A$ have negative real parts, then for each symmetric matrix $\S$ there exists a unique solution of
\begin{subequations}
\begin{align} \label{eq:sylv_lem2}
\A\Q+\Q\A^\Transp = -\S
\end{align}
given by
\begin{align} \label{eq:int_lem2}
\Q=\int_0^{\infty}e^{\A \tau}\S e^{\A^\Transp \tau} d\tau.
\end{align}
\end{subequations}
\end{cor}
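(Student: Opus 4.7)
The plan is to obtain the corollary as a direct limiting case of Lemma~\ref{thm:Q}, using Proposition~\ref{prop:sylvester} to upgrade existence into existence-and-uniqueness. Concretely, I would specialise Lemma~\ref{thm:Q} to $\B=\A^\Transp$, then let $T\to\infty$ and check that every term behaves as claimed.

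The first step is to verify that the uniqueness condition \eqref{eq:prop_req} is automatic here. Since $\A^\Transp$ has the same spectrum as $\A$, stability of $\A$ yields $\operatorname{Re}\!\bigl(\lambda_i(\A)+\lambda_j(\A^\Transp)\bigr)<0$ for every pair $(i,j)$, so the sum is in particular nonzero. Proposition~\ref{prop:sylvester} then guarantees that, for any right-hand side $-\S$, the equation \eqref{eq:sylv_lem2} admits exactly one solution. Thus, once I exhibit one matrix that satisfies \eqref{eq:sylv_lem2}, it must be the unique solution.

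The second step is to show that the improper integral \eqref{eq:int_lem2} is well defined and to evaluate the residual term in Lemma~\ref{thm:Q}. Stability of $\A$ implies a bound of the form $\|e^{\A\tau}\|\le C e^{-\alpha\tau}$ for some $C>0$ and some $\alpha>0$ strictly less than the spectral abscissa's absolute value; consequently $\|e^{\A\tau}\S e^{\A^\Transp\tau}\|\le C^2\|\S\|e^{-2\alpha\tau}$, so the integrand is absolutely integrable on $[0,\infty)$ and $e^{\A T}\S e^{\A^\Transp T}\to\mat{0}$ as $T\to\infty$. Applying Lemma~\ref{thm:Q} with $\B=\A^\Transp$ gives, for every finite $T$,
\begin{align*}
\A\Q_T+\Q_T\A^\Transp=-\S+e^{\A T}\S e^{\A^\Transp T},\qquad \Q_T:=\int_0^{T}e^{\A\tau}\S e^{\A^\Transp\tau}d\tau.
\end{align*}
Passing to the limit $T\to\infty$ using the two convergence facts just established yields $\A\Q+\Q\A^\Transp=-\S$ with $\Q$ as in \eqref{eq:int_lem2}. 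Combined with the uniqueness from the first step, this proves the corollary.

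The only delicate ingredient is the exponential bound on $\|e^{\A\tau}\|$ when $\A$ is stable but not necessarily diagonalisable; I would justify it briefly via the Jordan form, noting that the polynomial factors in $\tau$ appearing there are absorbed by any exponential with rate strictly smaller than the spectral abscissa of $\A$. Everything else is a routine dominated-convergence-style passage to the limit.
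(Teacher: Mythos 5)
Your proposal is correct and follows essentially the same route as the paper, which likewise obtains the corollary by specialising Lemma~\ref{thm:Q} to $\B=\A^\Transp$, letting $T\to\infty$, and invoking Proposition~\ref{prop:sylvester} for uniqueness via the observation that stability forces $\lambda_i(\A)+\lambda_j(\A^\Transp)\neq 0$. The only difference is that you spell out the convergence details (the exponential bound $\|e^{\A\tau}\|\le Ce^{-\alpha\tau}$, absolute integrability, and the vanishing of the boundary term $e^{\A T}\S e^{\A^\Transp T}$) that the paper leaves implicit, which is a welcome tightening rather than a departure.
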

According to Proposition~\ref{prop:sylvester} the integral \eqref{eq:Qint} cannot be computed using the Lyapunov equation \eqref{eq:Qlyap2} if $\A$ and $-\A$ have any common eigenvalues. This is especially the case if the system has integrators, which indeed is common in models intended for Kalman filtering. In the next section we will therefore present a solution which handles such systems as well. With this extension almost all systems of interest will be covered, except for the systems which have at least one pair of non-zero poles mirrored in the imaginary axis.

This extension will be performed by decomposing the problem into subproblems where some of these subproblems still can be solved using parts of the Lyapunov equation \eqref{eq:Qlyap2}, whereas the remaining subproblem can be solved analytically using the integral \eqref{eq:Qint}.

\section{Solution for systems with Integrators}
\label{sec:combined}
Consider the case when $\A$ is block triangular consisting of three blocks
\begin{align} \label{eq:block}
\A=
\begin{bmatrix}
\A_{11} & \A_{12} \\
\mat{0} & \A_{22}
\end{bmatrix},
\end{align}
where 
\begin{subequations} \label{eq:req}
\begin{align} 
\lambda_i(\A_{11})+\lambda_j(\A_{11}) & \neq 0 \quad \forall i,j, \label{eq:req1}\\
\lambda_i(\A_{11})+\lambda_j(\A_{22}) & \neq 0 \quad \forall i,j, \label{eq:req2} \\
\lambda_j(\A_{22}) 										& = 0 \quad \forall i,j. \label{eq:req3}
\end{align}
\end{subequations}
In this manner we have partitioned $\A$ such that all zero eigenvalues have been placed in $\A_{22}$ and all remaining non-zero eigenvalues in $\A_{11}$. Many systems do have such block triangular structure, for example if an observer canonical form has been used, see Example~\ref{ex:obs}. If the system does not have that form, an orthogonal transformation can be applied. This transformation can be computed using Schur decomposition and reordering of the eigenvalues \citep{golub:96}. This will also affect the covariance matrix $\S$ as well as $\V_{T_k}$ by considering this transformation as a state transformation, see Appendix~\ref{app:A}.

\subsection{Solution using Lyapunov and Sylvester equations}
According to Lemma~\ref{thm:Q}, the solution of the integral \eqref{eq:Qint} for the block triangular matrix \eqref{eq:block} shall obey the following Lyapunov equation
\begin{align*}
\begin{bmatrix}
\A_{11} &  \A_{12} \\
\mat{0} & \A_{22}
\end{bmatrix}
\hspace{-1mm}
\begin{bmatrix}
\Q_{11}& \Q_{12} \\
\Q_{12}^\Transp  & \Q_{22}
\end{bmatrix}
\hspace{-1mm}
+
\hspace{-1mm}
\begin{bmatrix}
\Q_{11} & \Q_{12} \\
\Q_{12}^\Transp  & \Q_{22}
\end{bmatrix}
\hspace{-1mm}
\begin{bmatrix}
\A_{11}^\Transp & \mat{0} \\
\A_{12}^\Transp & \A_{22}^\Transp
\end{bmatrix}
\hspace{-1mm}
=
\hspace{-1mm}
-
\hspace{-1mm}
\begin{bmatrix}
\V_{11} \hspace{-0mm}& \V_{12} \\
\V_{12}^\Transp  & \V_{22}
\end{bmatrix}\hspace{-0.5mm},
\end{align*}
where $\Q_{T_k}$ and $\V_{T_k}$ have been partitioned in a similar manner as $\A$. Note that the subscript $T_k$ has been omitted from the submatrices in order to make the notation less cluttered. This gives the following set of Lyapunov and Sylvester equations 
\begin{subequations}
\begin{align}
\A_{11}\Q_{11} +  \Q_{11}\A_{11}^\Transp & = -\V_{11} -\A_{12}\Q_{12}^\Transp - \Q_{12}\A_{12}^\Transp, \label{eq:11a} \\
\A_{11}\Q_{12} +  \Q_{12}\A_{22}^\Transp & = -\V_{12} -\A_{12}\Q_{22 }, \label{eq:12a}\\
\A_{22}\Q_{12}^\Transp +  \Q_{12}^\Transp\A_{11}^\Transp & = -\V_{12}^\Transp -\Q_{22 }\A_{12}^\Transp,\label{eq:21a}\\
\A_{22}\Q_{22} +  \Q_{22}\A_{22}^\Transp & = -\V_{22}. \label{eq:22a}
\end{align}
\end{subequations}

Based on the requirements in \eqref{eq:req1} and \eqref{eq:req2},
Proposition~\ref{prop:sylvester} guarantees that $\Q_{11}$ and
$\Q_{12}$ can be solved uniquely using \eqref{eq:11a} and
\eqref{eq:12a} if $\Q_{22}$ is known.  In contrast, \eqref{eq:22a} does
not have a unique solution for $\Q_{22}$.  Instead,
$\Q_{22}$ can be solved analytically using the integral
\eqref{eq:Qint}. Note that \eqref{eq:21a} is just a transposed version
of \eqref{eq:12a} and does not bring any extra information.

\subsection{Analytical solution for the nilpotent part}
Due to the block triangular structure of  $\A$, the submatrix $\Q_{22}$ will only depend on $\A_{22}$ and $\S_{22}$ via a similar expression as in \eqref{eq:Qint}. By starting from \eqref{eq:Qint} we have
\begin{subequations} \label{eq:Q1}
\begin{align}
\Q_{22} &  = \begin{bmatrix} \mat{0} && \I \end{bmatrix} \Q \begin{bmatrix} \mat{0} \\ \I \end{bmatrix} \\
				& = \begin{bmatrix} \mat{0} && \I \end{bmatrix} \int_0^{T_k} e^{\A\tau}\S {e^{\A^\Transp \tau}} d\tau \begin{bmatrix} \mat{0} \\ \I \end{bmatrix} \\
				& = \int_0^{T_k} \begin{bmatrix} \mat{0} && e^{\A_{22}\tau} \end{bmatrix} \S \begin{bmatrix}  \mat{0} \\e^{\A_{22}^\Transp \tau}  \end{bmatrix} d\tau  \\
& = \int_0^{T_k} e^{\AN\tau}\S_{22} {e^{\AN^\Transp \tau}} d\tau.	\label{eq:Q1last}
\end{align}
\end{subequations}
Further, since all eigenvalues of $\A_{22}$ are zero, the submatrix $\A_{22}$ will also be nilpotent \citep{lancaster:85} leading to
\begin{align}
e^{\AN\tau}=
%\sum_{i=0}^{\infty} \AN^i\frac{\tau^i}{i!} = \sum_{i=0}^{p-1} \AN^i\frac{\tau^i}{i!}+\sum_{i=m}^{\infty} \AN^i\frac{\tau^i}{i!}  = 
\sum_{i=0}^{p-1} \AN^i\frac{\tau^i}{i!},
\end{align}
where $p$ is the dimension of $\AN$, i.e. the number of integrators in the system. Expression \eqref{eq:Q1last} can then be computed analytically as
\begin{subequations} \label{eq:Q2}
\begin{align}
\Q_{22} & = \int_0^{T_k} \left(\sum_{i=0}^{p-1} \frac{1}{i!}\AN^i \tau^i \right)\S_{22} \left(\sum_{j=0}^{p-1} \frac{1}{j!}{\AN^j}^\Transp \tau^j \right) d\tau \notag \\
				& = \sum_{i=0}^{p-1} \sum_{j=0}^{p-1} \frac{1}{i!j!}\AN^i \S_{22} {\AN^j}^\Transp  \int_0^{T_k} \tau^{i+j} d\tau \\
				& = \sum_{i=0}^{p-1} \sum_{j=0}^{p-1} \frac{T_k^{i+j+1}}{i!j!(i+j+1)}\AN^i \S_{22} {\AN^j}^\Transp.
\end{align}
\end{subequations}
This is illustrated with the following example.
\begin{exmp}
Consider a constant velocity model, which formally can be described on the form
\begin{align*}
\dot{\x}(t)=
\underbrace{
\begin{bmatrix}
0 && 1 \\
0 && 0
\end{bmatrix}
}_{\A}
\x(t)+
\underbrace{
\begin{bmatrix}
0 \\
1 
\end{bmatrix}
}_{\B}
q(t), \,\,\, \E[q(t)q(\tau)]=\delta(t-\tau).
\end{align*}
This system has only zero eigenvalues which gives
\begin{subequations}
\begin{align}
\A & = \A_{22} = 
\begin{bmatrix}
0 && 1 \\
0 && 0
\end{bmatrix}, \\
\S & = \S_{22} = \E[\B q (\B q)^\Transp] =
\begin{bmatrix}
0 \\
1
\end{bmatrix}
1
\begin{bmatrix}
0 && 1 \\
\end{bmatrix}
=
\begin{bmatrix}
0 && 0 \\
0 && 1
\end{bmatrix}.
\end{align}

By using this in \eqref{eq:Q1} we get
\begin{align}
\Q_{T_k} & = \S T_k + \S\AN^\Transp\frac{T_k^2}{2}+\AN\S\frac{T_k^2}{2}+\AN\S\AN^\Transp\frac{T_k^3}{3} \notag \\
& =
\begin{bmatrix}
0 && 0 \\
0 && 1
\end{bmatrix}T_k + 
\begin{bmatrix}
0 && 0 \\
1 && 0
\end{bmatrix}
\frac{T_k^2}{2}+
\begin{bmatrix}
0 && 1 \\
0 && 0
\end{bmatrix}
\frac{T_k^2}{2}
+
\begin{bmatrix}
1 && 0 \\
0 && 0
\end{bmatrix}
\frac{T_k^3}{3} \notag \\
& =
\begin{bmatrix}
\frac{T_k^3}{3} && \frac{T_k^2}{2} \\
\frac{T_k^2}{2} && T_k
\end{bmatrix},
\end{align}
\end{subequations}
which is the same result as given by \cite{GrewalA:08}, but derived in a different way.
\end{exmp}

\subsection{General algorithm}
Based on the results in the last section, we can now propose an algorithm for computing the integral \eqref{eq:Qint}, also in the case where $\A$ consists of integrators, 
see Algorithm~\ref{alg:proposed}.

\begin{algorithm}
\caption{Proposed algorithm (for systems with arbitrary number of integrators)}  \label{alg:proposed}
The matrices $\A$ and $\S$ and the scalar ${T_k}$ are given. The matrices $\F_{T_k}$ and $\Q_{T_k}$ in \eqref{eq:FQint} can then be computed as
\begin{enumerate}
\setlength{\itemsep}{1mm}
\item \label{it:tf1}
Transform $\A$ and $\S$ to $\At$ and $\St$ such that $\At$ becomes block triangular
\begin{align*}
\U^{-1}\A\U
\hspace{-1mm}
=
\hspace{-1mm}
\At
\hspace{-1mm}
=
\hspace{-1mm}
\begin{bmatrix}
\At_{11} \hspace{-0.5mm}& \At_{12} \\
\mat{0} \hspace{-0.5mm}& \At_{22}
\end{bmatrix},
\,\,\,\,
\U^{-1}\S\U^{-\Transp}
\hspace{-1mm}
=
\hspace{-1mm}
\St
\hspace{-1mm}
=
\hspace{-1mm}
\begin{bmatrix}
\St_{11} \hspace{-0.5mm}& \St_{12} \\
\St_{12}^\Transp \hspace{-0.5mm}& \St_{22}
\end{bmatrix},
\end{align*}
and with all integrators collected in $\At_{22}$. This can be done with an orthogonal transformation computed using Schur decomposition and reordering of the eigenvalues.

\item \label{it:Ft}
Compute $\Ft_{T_k}=e^{\At T_k}$. %Here the block triangular of $\At$ can be utilized for efficient implementation.
\item \label{it:Vt}
Compute $\Vt_{T_k}=\St-\Ft_{T_k} \St \Ft_{T_k}^\Transp$.
\item \label{it:Qt}
Compute $$\Qt_{T_k}=\begin{bmatrix} \Qt_{11} & \Qt_{12} \\ \Qt_{12}^\Transp & \Qt_{22} \end{bmatrix}$$  using the following steps:
\begin{enumerate}
\setlength{\itemsep}{1mm}
\item Compute $\Qt_{22}$ by evaluating
\begin{align*} %\label{eq:int_alg}
\Qt_{22} & = \sum_{i=0}^{p-1} \sum_{j=0}^{p-1} \frac{T_k^{i+j+1}}{i!j!(i+j+1)}\At_{22}^i \St_{22} (\At_{22}^i)^\Transp,
\end{align*}
where $p$ is the number of integrators.
\item Compute $\Qt_{12}$ by solving the Sylvester equation
\begin{align} \label{eq:sylv_alg}
\At_{11}\Qt_{12} +  \Qt_{12}\At_{22}^\Transp & = -\Vt_{12} -\At_{12}\Qt_{22}.
\end{align}
\item Compute $\Qt_{11}$ by solving the Lyapunov equation
\begin{align} \label{eq:lyap_alg}
\At_{11}\Qt_{11} +  \Qt_{11}\At_{11}^\Transp & = -\Vt_{11} -\At_{12}\Qt_{12}^\Transp - \Qt_{12}\At_{12}^\Transp.
\end{align}
\end{enumerate}
\item \label{it:tf2}
Transform $\Ft_{T_k}$ and $\Qt_{T_k}$ back to $\F_{T_k}$ and $\Q_{T_k}$ 
\begin{subequations} \label{eq:trans_alg}
\begin{align}
\F_{T_k} & = \U \Ft_{T_k}\U^{-1}, \\
\Q_{T_k} & = \U \Qt_{T_k}\U^{\Transp}.
\end{align}
\end{subequations}
\end{enumerate}
\end{algorithm}

\begin{rem}
If $\A$ does not have any integrators, Algorithm~\ref{alg:proposed} will collapse to the simpler version in Algorithm~\ref{alg:lyap2}.
\end{rem}
\begin{rem}
In theory, $\U^{-1}=\U^\Transp$ since $\U$ is orthogonal. However, numerical algorithms for computing the Schur decomposition do not make $\U$ completely orthogonal. From a numerical point of view it is therefor a benefit to distinguish between $\U^{-1}$ and $\U^\Transp$.
\end{rem}
\begin{rem}
If $\At_{12}=\mat{0}$ the coupling in \eqref{eq:sylv_alg} and \eqref{eq:lyap_alg} via $\Qt_{12}$ and $\Qt_{22}$ will disappear and they can be solved independently from each other. If this is desired, the transformation in Step 1 can be extended to eliminate $\At_{12}$ by solving an addition Sylvester equation \citep{bavely:1979}. However, such transformation is no longer orthogonal and can be arbitrary ill-conditioned if the non-zero eigenvalues are close to zero.
\end{rem}
\begin{rem}
If the system already has a block triangular structure, Step~\ref{it:tf1} and Step~\ref{it:tf2} in Algorithm~\ref{alg:proposed} can be omitted. This is the case for the observer canonical form as seen in the following short example.
\end{rem}

\pagebreak
\begin{exmp} \label{ex:obs}
Consider a SISO-system of order $n=m+p$ with $m$ non-zero poles and $p$ additional integrators described with a transfer function
\begin{align}
G(s)=\frac{b_1s^{m-1}+b_2s^{m-2}+ \dots + b_{m-1}s+b_m}{s^n + a_1 s^{m-1} + \dots + a_{m-1}s + a_m} \cdot \frac{1}{s^p}.
\end{align}
This system can be described with the observer canonical form \citep{glad:00}
\begin{subequations}
\begin{align}
%\dot{\x} &=
%\left[
%\begin{array}{ccccc|cccc}
%-a_1 & 1 & 0 & \dots & 0 & 0 & 0 & \dots & 0\\
%-a_2 & 0 & 1 & \dots & 0 & 0 & 0 & \dots & 0\\
%\vdots & \vdots & \vdots & \ddots & \vdots & \vdots & \vdots &  &  \vdots\\
%-a_{m-1} & 0 & 0 & \dots & 1 & 0 & 0 & \dots & 0\\
%%-a_{m} & 0 & 0 & \dots & 0 & 1 & 0 & \dots & 0\\ \hline
%0			 & 0 & 0 & \dots & 0 & 0 & 1 & \dots & 0\\
%\vdots & \vdots & \vdots & & \vdots & \vdots  & \vdots  & \ddots & \vdots \\
%	0		 & 0 & 0 & \dots & 0 & 0 & 0 & \dots & 1\\
% 0			 & 0 & 0 & \dots & 0 & 0 & 0 & \dots & 0\\
%\end{array}
%\right]
%\x
%+
%\left[
%\begin{array}{c}
%0 \\
%\vdots \\
%0 \\
%b_1 \\
%\vdots \\
%\vdots \\
%\vdots \\
%b_m \\
%\end{array}
%\right]\w, \\
\dot{\x} &=
\left[
\begin{array}{cccc|cccc}
-a_1 & 1 & \dots & 0 & 0 & 0 & \dots & 0\\
%-a_2 & 0 & 1 & \dots & 0 & 0 & 0 & \dots & 0\\
\vdots & \vdots & \ddots & \vdots & \vdots & \vdots &  &  \vdots\\
-a_{m-1} & 0 & \dots & 1 & 0 & 0 & \dots & 0\\
-a_{m} & 0 & \dots & 0 & 1 & 0 & \dots & 0\\ \hline
0			 & 0 & \dots & 0 & 0 & 1 & \dots & 0\\
\vdots & \vdots & & \vdots & \vdots  & \vdots  & \ddots & \vdots \\
0			 & 0 & \dots & 0 & 0 & 0 & \dots & 1\\
0			 & 0 & \dots & 0 & 0 & 0 & \dots & 0\\
\end{array}
\right]
\x
+
\left[
\begin{array}{c}
0 \\
\vdots \\
0 \\
b_1 \\
%\vdots \\
\vdots \\
\vdots \\
b_m \\
\end{array}
\right]\w, \\
\y & =
\left[
\begin{array}{cccc}
1 &  0 &  \dots &   0
%\quad\,\,\, 1 & \quad\, 0 & \qquad & \dots & \qquad &  0
\end{array} 
\right]\x,
\end{align}
\end{subequations}
which can be written more compactly as
\begin{subequations}
\begin{align}
\dot{\x} &=
\left[
\begin{array}{c|c}
\A_{11} & \A_{12} \\ \hline
\mat{0} & \A_{22}
\end{array}
\right]
\x
+
\B\w, \\
\y & =
\begin{bmatrix}
1 & 0 & \dots & 0
\end{bmatrix}\x.
\end{align}
\end{subequations}
This system has by construction the desired block triangular structure. %Therefor, Step~\ref{it:tf1} and Step~\ref{it:tf2} in Algorithm~\ref{alg:proposed} can be omitted.
\end{exmp}

\section{Numerical evaluation}
\label{sec:num}
In this section the numerical properties of the proposed solution will be compared with a standard solution presented by \cite{VanLoan:78} given in Algorithm~\ref{alg:loan}. 

\begin{algorithm}[ht]
\caption{Van Loan's method}  \label{alg:loan}
The matrices $\A$ and $\S$ and the scalar ${T_k}$ are given. The matrices $\F_{T_k}$ and $\Q_{T_k}$ in \eqref{eq:FQint} can then be computed as
\begin{subequations} \label{eq:FQloan}
\begin{enumerate}
\item
Form the $2n \times 2n$ matrix
\begin{align}
\H=
\begin{bmatrix}
\A & \S \\
\mat{0} & -\A^\Transp
\end{bmatrix}.
\end{align}
\item
Compute the matrix exponential
\begin{align} \label{eq:loan_exp}
\e^{\H T_k} =
\begin{bmatrix}
\M_{11} & \M_{12} \\
\mat{0} & \M_{22}
\end{bmatrix}.
\end{align}
\item
Then $\F_{T_k}$ and $\Q_{T_k}$ are given as
\begin{align}
\F_{T_k} = \M_{11}, \qquad \Q_{T_k} = \M_{12} \M_{11}^\Transp.
\end{align}
\end{enumerate}
\end{subequations}
\end{algorithm}
\vspace{-0.5mm}

\subsection{Implementation aspects}
\vspace{-0.5mm}
In both methods \matlab's built-in function {\tt expm} has been used for computing the matrix exponential. In Step 1 of Algorithm~\ref{alg:proposed} the functions {\tt schur} and {\tt ordschur} have been used for computing the Schur decomposition and the reordering of the eigenvalues. Finally, the Lyapunov and Sylvester equations have been solved using {\tt lyap}.
\vspace{-0.5mm}
\subsection{Simulation results}
\vspace{-0.5mm}
In total 100 systems of order $n=6$ with $m=4$ stable poles and $p=2$ additional integrators are randomly generated. Each system is normalized such that the fastest pole is at distance 1 from the imaginary axis, i.e. $|\text{min}(\text{Re}(\lambda_i))|=1$. An estimate $\hat{\Q}_{T_k}$ is computed using both Algorithm~\ref{alg:proposed} and Algorithm~\ref{alg:loan} with single precision for different values of the sampling time $T_k$. Finally, the error
$$\varepsilon = \|\hat{\Q}_{T_k}-\Q_{T_k}\|_2/\|\Q_{T_k}\|_2$$
is evaluated, where $\Q_{T_k}$ is computed using numerical integration of \eqref{eq:Qint} with double precision, here considered as the true value. The result is presented in Figure~\ref{fig:performance}.

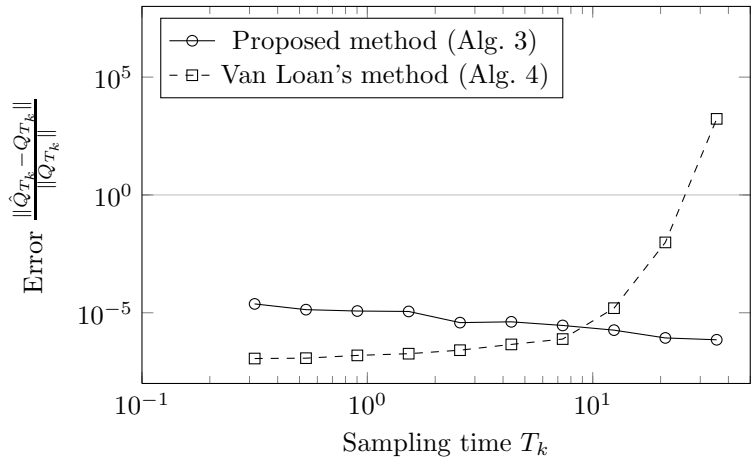
\begin{figure}
\centering
% This file was created by matlab2tikz v0.2.3.
% Copyright (c) 2008--2012, Nico Schlömer <nico.schloemer@gmail.com>
% All rights reserved.
% 
% The latest updates can be retrieved from
%   http://www.mathworks.com/matlabcentral/fileexchange/22022-matlab2tikz
% where you can also make suggestions and rate matlab2tikz.
% 
% 
% 
\begin{tikzpicture}

\begin{loglogaxis}[%
view={0}{90},
width=\figurewidth,
height=\figureheight,
scale only axis,
xmin=0.1, xmax=50,
xminorticks=true,
xlabel={Sampling time $T_k$},
ymin=1e-08, ymax=1e+08,
%yminorticks=true,
ytick={1e-05, 1e00, 1e05},
minor ytick={1e00},
yminorgrids,
ylabel={Error $\frac{\|\hat{\Q}_{T_k}-\Q_{T_k}\|}{\|\Q_{T_k}\|}$},
legend style={at={(0.03,0.97)},anchor=north west,align=left}]
\addplot [
color=black,
solid,
mark=o,
mark options={solid}
]
coordinates{
 (0.316227766016838,2.37607546296204e-05)(0.534290900493161,1.35649788717274e-05)(0.902725177948457,1.185125347547e-05)(1.52522295653902,1.13665391836548e-05)(2.57698037451488,3.82086955141858e-06)(4.35400465365665,4.12989402320818e-06)(7.35642254459641,2.89449053525459e-06)(12.4292362915131,1.83127190211962e-06)(21.0001415570865,8.55957182466227e-07)(35.4813389233575,7.14014902314375e-07) 
};
\addlegendentry{Proposed method (Alg.~\ref{alg:proposed})};

\addplot [
color=black,
dashed,
mark=square,
mark options={solid}
]
coordinates{
 (0.316227766016838,1.14413388985213e-07)(0.534290900493161,1.18521022329787e-07)(0.902725177948457,1.55363338194547e-07)(1.52522295653902,1.83387001584379e-07)(2.57698037451488,2.58392674368224e-07)(4.35400465365665,4.54336714028614e-07)(7.35642254459641,7.66107348226797e-07)(12.4292362915131,1.56696278281743e-05)(21.0001415570865,0.00967796426266432)(35.4813389233575,1690.95483398438) 
};
\addlegendentry{Van Loan's method (Alg.~\ref{alg:loan})};

\end{loglogaxis}
\end{tikzpicture}%
\vspace{-3mm}
\caption{The performance of the proposed method (Algorithm~\ref{alg:proposed}) and Van Loan's method (Algorithm~\ref{alg:loan}).}
\label{fig:performance}
\end{figure}

% Stable system, big T. Conclusion: Lydapunov has advantage
According to the result the proposed method outperforms the standard method for large $T_k$. The reason will become clear if we investigate the two methods further. In Algorithm~\ref{alg:loan}, both $\A T_k$ and $-\A^\Transp T_k$ are present in the augmented matrix $\H T_k$ and the task to compute its matrix exponential \eqref{eq:loan_exp} will become ill-conditioned if $T_k$ or $|\text{min}(\text{Re}(\lambda_i))|$ is large. In fact, the error will grow exponentially with $T_k$, or the magnitude of work will grow linearly with $T_k$ to keep a certain tolerance \citep{VanLoan:78}. This issue is not present in the proposed method, which can be seen in its simplified version in Algorithm~\ref{alg:lyap}. If $T_k$ is large we have $\F_{T_k}=e^{\A T_k} \approx 0$ and $\Q_{T_k}$ will approach the stationary covariance $\P$ according to \eqref{eq:Qlyap}. The same properties are shared by Algorithm~\ref{alg:proposed}. 

However, for short sampling times the proposed method performers slightly worse. This is especially the case if the system has integrators as well as non-zero poles close to the origin leading to that the Sylvester equation \eqref{eq:sylv_alg} will become ill-conditioned. Future work shall focus on techniques to circumvent this problem. The proposed method has also advantages when it comes to computational complexity since it only needs to compute the matrix exponential of an $n \times n$ matrix rather than of an augmented $2n \times 2n$ matrix as required by van Loan's method.

%Second, if the integral has to be solved multiple times for the same matrices $\A$ and $\S$ but with different sampling periods $T_k$ (as common in data with non-equidistant sampling), the proposed method the Schur decomposition only has to be calculated once.

\section{Conclusions and future work}

\label{sec:con}
An algorithm for computing an integral involving the matrix
exponential common in optimal sampling was proposed. The
algorithm is based on a Lyapunov equation and is justified with
a novel lemma. An extension to systems with integrators was
presented. Numerical evaluations showed that the proposed algorithm has
advantageous numerical properties for large sampling times in
comparison with a standard method in the literature.

Further work includes extending the algorithm further to handle arbitrary matrices, i.e. also matrices with non-zero eigenvalues mirrored in the imaginary axis. Also the numerical properties should be analyzed further and strategies for improving the numerical properties for slow poles should be investigated.

\bibliographystyle{ifacconf-harvard}
\bibliography{refs_ctu}             % bib file to produce the bibliography

\appendix
\section{State transformation} \label{app:A}    % Each appendix must have a short title.
\vspace{-0.5mm}
Consider the following state transformation
\vspace{-0.5mm}
\begin{align} \label{eq:trans}
\x=\U\xt.%, \qquad \text{where} \qquad  \U^\Transp \U = \I.
\end{align}
\vspace{-0.5mm}
By applying \eqref{eq:trans} to the dynamical equation \eqref{eq:cont_dyn2a} we get
\begin{subequations}
\begin{align}
\dot{\x}	& =    \A \x +    \w \quad \Rightarrow \\
\U \dot{\xt}	& =    \A \U \xt +    \w \quad \Rightarrow \\
%\dot{\xt}						& = \U^{-1} \A \U \xt + \U^{-1} \w \quad \Rightarrow \\
\dot{\xt}						& = \U^{-1} \A \U \xt + \U^{-1} \w \quad \Rightarrow \\
\dot{\xt}						& = \At \xt + \tilde{\w}.
\end{align}
\end{subequations}
which gives the following transformation of $\A$, $\S$ and $\V$
\begin{subequations} \label{eq:trans1}
\begin{align}
\At & = \U^{-1} \A \U, \\
\St & = \E[\wt \wt^\Transp] = \E[\U^{-1}\w (\U^{-1}\w)^\Transp] = \U^{-1}\E[\w \w^\Transp]\U^{-\Transp} \notag \\
\vspace{-1mm}
		& = \U^{-1} \S\U^{-\Transp}.
%\Vt_{T_k} & = \St-e^{\At T_k}\St e^{\At^\Transp T_k} \notag\\
%		& = \U^\Transp\S\U-e^{\U^\Transp \A \U T_k}\U^\Transp\S\U e^{\U \A^\Transp \U^\Transp T_k} \notag\\
%		& = \U^\Transp (\S-e^{\A T_k}\S e^{\A^\Transp T_k})\U %\notag\\
%		 = \U^\Transp \V_{T_k} \U.
\end{align}
\end{subequations}
These matrices will then be used to compute $\tilde{F}_{T_k}$ and $\tilde{Q}_{T_k}$ by following Step \ref{it:Ft}-\ref{it:Qt} in Algorithm~\ref{alg:proposed}. We then have
\begin{subequations}
\begin{align}
\xt_{k+1}	& =  \Ft_{T_k}\xt_{k} +  \wt_k \quad \Rightarrow \\
\U^{-1} \x_{k+1}	& =   \Ft_{T_k}\U^{-1}\x_{k} +  \wt_k \quad \Rightarrow \\
\x_{k+1}					& =   \U \Ft_{T_k}\U^{-1}\x_{k} +  \U\wt_k \quad \Rightarrow \\
\x_{k+1}					& =   \F_{T_k}\x_{k} +  \w_k \quad
\end{align}
\end{subequations}
which gives the transformations
\begin{subequations} \label{eq:trans2}
\begin{align}
\F_{T_k} & = \U \Ft_{T_k}\U^{-1}, \\
\Q_{T_k} & = \E[\w_k \w_k^\Transp] = \E[\U\wt_k (\U\wt_k)^\Transp] = \U\E[\wt_k \wt_k^\Transp]\U^\Transp \notag \\
				 & = \U \Qt_{T_k}\U^{\Transp}.
\end{align}
\end{subequations}
Note that if $\U$ is orthogonal, we have $\U^{-1}=\U^{\Transp}$.
\end{document}